\theoremstyle{plain}
\newtheorem{theorem}{Theorem}
\newtheorem{lem}{Lemma}
\newtheorem{cor}{Corollary}
\newcommand{\Stab}{\mathrm{Stab}}
\newcommand{\dom}{\mathrm{dom}\,}
\newcommand{\ran}{\mathrm{ran}\,}
\newcommand{\C}{\mathbb{C}}
\newcommand{\N}{\mathbb{N}}
\newcommand{\Z}{\mathbb{Z}}
\newcommand{\D}{\mathbb{D}}
\newcommand{\T}{\mathbb{T}}
\newcommand{\irD}{\mathcal{D}}
\newcommand{\irH}{\mathcal{H}}
\begin{document}

\title{Bilateral weighted shift operators similar to normal operators}

\author{Gy\"orgy P\'al Geh\'er}
\address{Bolyai Institute, University of Szeged, H-6720 Szeged, Aradi v\'ertan\'uk tere 1, Hungary}
\address{MTA-DE "Lend\"ulet" Functional Analysis Research Group, Institute of Mathematics, University of Debrecen, H-4010 Debrecen, P.O. Box 12, Hungary}
\email{gehergy@math.u-szeged.hu; gehergyuri@gmail.com}
\urladdr{\url{http://www.math.u-szeged.hu/~gehergy/}}

\maketitle

\begin{abstract}
We prove that an injective, not necessarily bounded weighted bilateral shift operator on $\ell^2(\Z)$ is similar to a normal operator if and only if it is similar to a scalar multiple of the simple (i.e. unweighted) bilateral shift operator $S$.
\end{abstract}

\maketitle



\section{Introduction}

Let $\irH$ be a complex Hilbert space. The question whether a given linear operator is similar to an operator belonging to a special class is classical. Investigation of similarity to normal operators goes back to the 1940s. One of the first important results is the famous Sz.-Nagy similarity theorem from 1947. Namely, in \cite{SzN} B. Sz.-Nagy proved that a bounded linear operator $T$ is similar to a unitary operator if and only if $T$ has bounded inverse and we have $\sup\{\|T^n\|\colon n\in\Z\}<\infty$. The "only if" part is obvious. On the contrary, the "if" part was extremely surprising. Since then many results have been provided in this direction. We only mention a few of them: in \cite{BN,Kup1,Kup3} linear resolvent tests were provided; and in a recent paper of the author \cite{Ge}, a test concerning the powers of the operator was given. The investigation of operators which are similar to contractions is another classical direction in the theory of similarity problems. Concerning this topic we refer to the following articles: \cite{Foguel,Halmos,Lebow,Pisier}.

The classes of the so-called weighted bilateral, unilateral or backward shift operators are very useful for an operator theorist (\cite{Ni,Sh}). Besides normal operators these are the next natural classes on which conjectures can be tested. Furthermore, weighted shift operators naturally appear in many other areas of Analysis. Thus several properties of weighted shift operators were charaterized in terms of their weight sequences; for instance normality, hyponormality, super and hypercyclicity e.t.c. (see \cite{Ni,Sa1,Sa2,Sh}). In the present short note we are interested in the problem of how we can characterize similarity to normal operators of injective, bilateral weighted shift operators on $\ell^2(\Z)$. We point out that an injective unilateral shift operator cannot be similar to a normal operator since its range is not dense, but it has trivial kernel.

Usually a weighted bilateral shift operator is defined as follows. Let $\underline{w} = \{w_k\}_{k\in\Z}\subseteq\C$ be an arbitrary sequence and $\{e_k\}_{k\in\Z}$ be the usual orthonormal base in $\ell^2(\Z)$. Let $\irD = \{x = \sum_{k\in\Z} \xi_k \cdot e_k \colon \sum_{k\in\Z} |\xi_k|^2 <\infty, \; \sum_{k\in\Z} |\xi_k w_k|^2 <\infty\}$ which is a linear manifold in $\ell^2(\Z)$. We define $S_{\underline{w}} \colon \ell^2(\Z) \to \ell^2(\Z)$ by the following equation:
\[
S_{\underline{w}}\left(\sum_{k\in\Z} \xi_k \cdot e_k\right) = \sum_{k\in\Z} \xi_{k-1}w_{k-1} \cdot e_k \qquad \left(\sum_{k\in\Z} \xi_k \cdot e_k \in \irD\right).
\]
The domain of an operator $T\colon \irH \to \irH$ will be denoted by $\dom T$. Using this terminology, we can write $\irD = \dom S_{\underline{w}}$. It is quite straightforward to see that $S_{\underline{w}}$ is bounded exactly when the weight sequence $\underline{w} = \{w_k\}_{k\in\Z}$ is bounded. In the special case when all weights are equal to 1, we speak about the simple bilateral shift, $S$. It is well-known that $S$ is a unitary operator.

Our result is stated and proven in the next section.

\section{The characterization}

Characterization of normality for bilateral weighted shift operators can be obtained by a simple calculation (see also \cite{Sh}). Namely, $S_{\underline{w}}$ is normal if and only if we have $|w_k| = |w_{k+1}|$ for every $k\in\Z$, which is satisfied exactly when $S_{\underline{w}}$ is unitarily equivalent to $|w_0|\cdot S$. Here we consider the much harder question: which weighted bilateral shift operators are similar to normal operators? Since this is a very natural question, it is quite surprising that we have not found any paper which considers this problem.

Before giving our proof of this result, we shall specify what we mean exactly by the similarity of two (unbounded) operators. We will follow the definition given in \cite[p. 213]{OS}. Namely, two operators $A, B\colon \irH\to\irH$ are said to be similar, if there exists an invertible operator $X\colon \irH\to\irH$ such that $X(\dom A) = \dom B$ and $X A h = B X h$ $(h\in\dom A)$ are fulfilled. Similarity is an equivalence relation (see \cite{OS}). Let us assume that $A$ and $B$ are similar. Are $A^n$ and $B^n$ necessarily similar as well ($n\in\N$)? This is a natural question, which is obvious in case of bounded operators. The following lemma gives the positive answer in general.

\begin{lem}
Suppose that $A$ and $B$ are similar, and $n\in\N$. Then $A^n$ and $B^n$ are also similar.
\end{lem}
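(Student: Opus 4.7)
The plan is to show that the very same invertible operator $X$ witnessing the similarity of $A$ and $B$ also witnesses the similarity of $A^n$ and $B^n$, and to do this by induction on $n$. That is, I would prove that $X(\dom A^n)=\dom B^n$ and $XA^n h=B^n Xh$ for every $h\in\dom A^n$. The case $n=1$ is exactly the hypothesis, so all the content is in the inductive step.

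For the inductive step, the key is to exploit the recursive definition $\dom T^{n+1}=\{h\in\dom T^n:T^n h\in\dom T\}$. First I would verify the inclusion $X(\dom A^{n+1})\subseteq\dom B^{n+1}$: for $h\in\dom A^{n+1}$ the inductive hypothesis yields $Xh\in\dom B^n$ with $B^n Xh=XA^n h$, and since $A^n h\in\dom A$, the base case gives $XA^n h\in\dom B$, so $B^n Xh\in\dom B$, i.e.\ $Xh\in\dom B^{n+1}$. The intertwining identity then follows from a single extra application of $B$:
\[
B^{n+1}Xh=B(B^n Xh)=B(XA^n h)=XA(A^n h)=XA^{n+1}h.
\]

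For the reverse inclusion $\dom B^{n+1}\subseteq X(\dom A^{n+1})$, I would take $g\in\dom B^{n+1}\subseteq\dom B^n$; by induction there exists a unique $h\in\dom A^n$ with $Xh=g$, and it only remains to check $A^n h\in\dom A$. But $XA^n h=B^n Xh=B^n g\in\dom B=X(\dom A)$, so injectivity of $X$ forces $A^n h\in\dom A$, hence $h\in\dom A^{n+1}$. This closes the induction.

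The main (though fairly mild) obstacle is the careful domain bookkeeping: one must apply the recursive definition of $\dom T^{n+1}$ in the correct direction at each step, and be sure to invoke invertibility of $X$ precisely at the point of pulling elements of $\dom B^{n+1}$ back into $\dom A^{n+1}$. Once set up this way, every individual step is a one-line consequence of either the base case or the previous inductive stage.
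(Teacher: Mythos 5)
Your proof is correct and takes essentially the same route as the paper's: induction on $n$ with the same intertwiner $X$ witnessing the similarity at every stage. The only cosmetic differences are that you peel off the last factor (via $\dom A^{n+1}=\{h\in\dom A^{n}\colon A^{n}h\in\dom A\}$) and establish the two inclusions separately, invoking injectivity of $X$ for the reverse one, whereas the paper peels off the first factor and obtains $X(\dom A^{n})=\dom B^{n}$ as a single chain of set equalities.
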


\begin{proof}
We have to show that $X A^n h = B^n X h$ $(h\in\dom A^n)$ and $X(\dom A^n) = \dom B^n$ hold. We will use an induction on $n$, thus we assume that for smaller powers the statement has already been proven. First, it is well-known that $\dom A^n\subseteq \dom A^{n-1}\subseteq\irH$. Therefore we can write the following:
\[
X (\dom A^n) = X \left( \left\{ h\in \irH \colon Ah \in \dom A^{n-1} = X^{-1} (\dom B^{n-1}) \right\} \right)
\]
\[
= X \left( \left\{ h\in \dom A^{n-1} \colon XAh = BXh \in \dom B^{n-1} \right\} \right) 
\]
\[
= \left\{ Xh\in X\dom A^{n-1} = \dom B^{n-1} \colon B(Xh) \in \dom B^{n-1} \right\} = \dom B^n.
\]
Second, let $h\in\dom A^n$. Then we have $X A^{n-1} A h = B^{n-1} X A h = B^n X h$, since $Ah \in \dom A^{n-1}$. This completes the proof. \qed
\end{proof}

Now, we are in the position to state and prove our result.

\begin{theorem}\label{bil_thm}
Let $S_{\underline{w}}$ be an injective, not necessarily bounded bilateral weighted shift operator on $\ell^2(\Z)$. Then the following conditions are equivalent:
\begin{itemize}
\item[(a)] $S_{\underline{w}}$ is similar to a normal operator,
\item[(b)] there exists a constant $c>0$ such that $c\cdot S_{\underline{w}}$ is similar to $S$, 
\item[(c)] we can find a constant $c>0$ such that we have
\[ 
\sup \Big\{ c^n\prod_{j=1}^{n} |w_{k+j}| \colon k\in\Z, n\in\N \Big\} < \infty 
\]
and
\[ 
\inf \Big\{ c^n\prod_{j=1}^{n} |w_{k+j}| \colon k\in\Z, n\in\N \Big\} > 0.
\]
\end{itemize}
\end{theorem}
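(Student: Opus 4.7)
My proof plan combines easy direct calculations with a spectral analysis of the similarity. The implication (b)$\Rightarrow$(a) is immediate, since $S$ is unitary. For (c)$\Rightarrow$(b), I define the diagonal operator $De_k=d_k e_k$ by $d_0=1$ and $d_{k+1}=d_k/(cw_k)$; direct telescoping gives $D(cS_{\underline{w}})D^{-1}=S$, and condition (c), specialized to the indices $k=-1$ and $k=-n-1$, says precisely that $D$ and $D^{-1}$ are bounded. Conversely, (b)$\Rightarrow$(c) follows because the preceding lemma gives $X(cS_{\underline{w}})^n X^{-1}=S^n$ for every $n\in\Z$, so $\|(cS_{\underline{w}})^n\|\le\|X\|\|X^{-1}\|$; combined with $\|(cS_{\underline{w}})^n e_k\|=c^n\prod_{j=0}^{n-1}|w_{k+j}|$ (and its analogue for negative $n$), this yields the sup and inf conditions of (c).

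The main implication is (a)$\Rightarrow$(c). Assume $XS_{\underline{w}}X^{-1}=N$ with $N$ normal. The vector $e_0$ is bilaterally cyclic for $S_{\underline{w}}$ in the strong sense that $\overline{\mathrm{span}}\{S_{\underline{w}}^k e_0:k\in\Z\}=\ell^2(\Z)$, because $S_{\underline{w}}^k e_0$ is a nonzero scalar multiple of $e_k$; consequently $h_0:=Xe_0$ is cyclic for $N$ in the same sense. The spectral theorem provides a unitary $V\colon\ell^2(\Z)\to L^2(\mu_0)$ with $VNV^{-1}=M_z$ and $Vh_0=1$, where $\mu_0$ is the scalar spectral measure of $h_0$, a finite Borel measure on $\C\setminus\{0\}$ all of whose moments $\int|z|^{2k}\,d\mu_0$ are finite (since $e_k\in\dom S_{\underline{w}}^j$ for every $j,k\in\Z$). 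Setting $Y=VX$, which is bounded and boundedly invertible, the relation $Nh_k=w_k h_{k+1}$ transfers via $V$ to $zf_k=w_k f_{k+1}$ for $f_k:=Ye_k$, and together with $f_0=1$ this gives $Ye_k=c_k z^k$ with $c_k$ the natural product of weights. Since $Y$ sends the orthonormal basis $\{e_k\}$ to a Riesz basis, $\{c_k z^k:k\in\Z\}$ is a Riesz basis of $L^2(\mu_0)$.

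The crux is now to show that this Riesz-basis property forces $\mu_0$ to be supported on a single circle $\{|z|=\rho\}$. I would use the polar decomposition $\mu_0=\int^{\oplus}\mu_0^{(r)}\,d\nu(r)$, where $\nu$ is the pushforward of $\mu_0$ under $z\mapsto|z|$: on the fiber $\{|z|=r\}$ the Laurent polynomial $\sum a_k z^k$ equals $\sum a_k r^k e^{ik\theta}$, so its $m$-th angular Fourier coefficient is the \emph{rigid} monomial $a_m r^m$ in $r$. If $\nu$ were not a point mass, one could pick a nonzero $g\in L^2(\nu)$ orthogonal to $r\mapsto r^{m+1}$ for some $m$, and then $f(z):=g(|z|)e^{im\theta}$ would be a nonzero element of $L^2(\mu_0)$ orthogonal to every $z^k$, contradicting density. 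Thus $\mathrm{supp}(\mu_0)\subseteq\{|z|=\rho\}$; normality then yields $N=\rho U$ for a unitary $U$, and with $c=1/\rho$ the operator $cS_{\underline{w}}=X^{-1}UX$ is similar to a unitary, from which (c) follows as in the (b)$\Rightarrow$(c) step. The main obstacle is making the polar/Fourier orthogonality argument fully rigorous without assuming $\mu_0^{(r)}$ to be arclength on each circle; as a bonus, the single-circle conclusion forces $\sigma(N)$ to be compact and hence $S_{\underline{w}}$ itself to be bounded, which is how the apparent generality of "not necessarily bounded" in the statement gets automatically resolved.
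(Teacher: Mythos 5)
Your treatment of (b)$\Rightarrow$(a), (c)$\Rightarrow$(b) and (b)$\Rightarrow$(c) is sound: the explicit diagonal similarity replaces the paper's citation of Shields' Theorem 2, and the norm identities for $\|(c\,S_{\underline{w}})^{\pm n}\|$ match the paper's closing step (you only need to add the small standard remark that similarity to the invertible $S$ forces $c\,S_{\underline{w}}$ to be everywhere defined, bounded and boundedly invertible, so that the Lemma extends to negative powers). The core implication (a)$\Rightarrow$(c), however, has a genuine gap at exactly the point you flag, and the gap is not merely one of rigour: the lemma your argument needs is false as stated. You want to conclude that if the radial pushforward $\nu$ of $\mu_0$ is not a point mass, then the monomials $z^k$ fail to have dense span in $L^2(\mu_0)$, by exhibiting $f(z)=g(|z|)e^{im\theta}$ orthogonal to every $z^k$. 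The orthogonality $\langle f,z^k\rangle=0$ for $k\neq m$ requires the angular Fourier coefficients $\int e^{i(m-k)\theta}\,d\mu_0^{(r)}(\theta)$ to vanish, i.e.\ the fibre measures to be rotation invariant, which you cannot assume. Worse, the intended conclusion itself is false without using the Riesz-basis hypothesis: for $\mu_0=\delta_1+\delta_2$ (two atoms on the positive real axis) the span of $\{z^k\}_{k\in\Z}$ is all of $L^2(\mu_0)\cong\C^2$, yet $\nu$ is not a point mass. So density alone can never force the single-circle conclusion; the two-sided Riesz inequality $A\sum|a_k|^2\le\|\sum a_kc_kz^k\|^2\le B\sum|a_k|^2$ must enter in an essential way, and the proposal gives no argument for how it localizes the support of $\mu_0$ to one circle.

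For comparison, the paper reaches the single-circle conclusion by a softer device that avoids cyclicity, the $L^2(\mu)$ model and basis theory altogether. It introduces $\Stab(T)=\{x\in\bigcap_{n}\dom T^n:\|T^nx\|\to0\}$, shows that for any weighted bilateral shift this set is either $\{0\}$ or dense, while for a normal operator it equals the closed subspace $\ran E(\D)$; since similarity (via Lemma 1 applied to all powers) carries $\Stab(c\,S_{\underline{w}})$ onto $\Stab(c\,N)=\ran E(\tfrac1c\D)$, every spectral subspace of an open disk centred at $0$ is $\{0\}$ or the whole space, which forces $E$ to be concentrated on one circle; injectivity rules out the circle of radius $0$, and Sz.-Nagy's theorem finishes the proof. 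If you wish to salvage your route, the missing ingredient is a quantitative substitute for your density argument, e.g.\ comparing the growth of $\|M_z^{\pm n}f\|$ for $f$ concentrated near two distinct radii against the uniform bounds supplied by the Riesz basis --- but that is essentially a rediscovery of the paper's $\Stab$ dichotomy.
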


\begin{proof}
First of all, the (c)$\Longrightarrow$(b) part follows immediately from \cite[Theorem 2]{Sh}. Second, the (b)$\Longrightarrow$(a) part is obvious, since $S$ is a normal operator. Thus we only have to deal with the (a)$\Longrightarrow$(c) part. We consider an arbitrary operator $T\colon \irH\to\irH$, and we define the following set:
\[
\Stab(T) = \left\{x\in\bigcap_{n\in\N} \dom T^n \colon \lim_{n\to\infty} \|T^n x\| = 0\right\}
\]
which is clearly a linear manifold. Obviously the zero vector is always an element of $\Stab(T)$. We note that in case when we have $\sup_{n\in\N} \|T^n\| < \infty$, the set $\Stab(T)$ is a subspace (i.e. closed linear manifold) which is hyperinvariant for $T$ (see e.g. \cite{NFBK}). 

Concerning $S_{\underline{w}}$, we claim that either $\Stab(S_{\underline{w}}) = \{0\}$ or it is a dense linear manifold in $\ell^2(\Z)$. Suppose that we have a non-zero vector $x = \sum_{k\in\Z} \xi_k \cdot e_k \in\Stab(S_{\underline{w}})$, then there is an index $k_0\in\Z$ such that $\xi_{k_0} \neq 0$. The condition $x\in\cap_{n=1}^\infty \dom S_{\underline{w}}^n$ is equivalent to the following: 
\begin{equation}\label{powdom_eq}
\sum_{k\in\Z} |\xi_k w_k \dots w_{k+n-1}|^2 < \infty \qquad (\forall\;n\in\N).
\end{equation}
Therefore we get $e_{k_0}\in\cap_{n=1}^\infty \dom S_{\underline{w}}^n$. Since we have
\[
\|S_{\underline{w}}^n e_{k_0}\| \leq \left\|S_{\underline{w}}^n \left(\frac{1}{\xi_{k_0}}\cdot x\right)\right\| = \frac{1}{|\xi_{k_0}|} \cdot \left\|S_{\underline{w}}^n x\right\| \to 0 \quad (n\to\infty),
\]
we conclude $e_{k_0}\in\Stab(S_{\underline{w}})$. Now, let $j\in\N$ be arbitrary. On the one hand, it is quite straightforward that $S_{\underline{w}}^j e_{k_0} \in \Stab(S_{\underline{w}})$, hence we obtain $e_{k_0+j} \in \Stab(S_{\underline{w}})$. On the other hand, by \eqref{powdom_eq} we have $e_{k_0-j}\in \cap_{n=1}^\infty \dom S_{\underline{w}}^n$. Obviously, 
\[
\lim_{n\to\infty} \|S_{\underline{w}}^n e_{k_0-j}\| = 
\lim_{n\to\infty} |w_{k_0-j}\dots w_{k_0-1}|\cdot\|S_{\underline{w}}^{n-j} e_{k_0}\| = 0,
\]
which yields $e_{k_0-j}\in\Stab(S_{\underline{w}})$. Therefore we conclude that every element of the standard orthonormal base lies in $\Stab(S_{\underline{w}})$. Thus $\Stab(S_{\underline{w}})$ has to be dense, which verifies our statement.

Now, we show that any normal operator $N$ satisfies 
\begin{equation}\label{Nstab_eq}
\Stab(N) = \ran E(\D),
\end{equation}
where $\D$ denotes the open unit disk of $\C$, and $E$ is the spectral measure for $N$. Let us consider an arbitrary vector $x = x_1\oplus x_2 \in \cap_{n=1}^\infty \dom N^n$ with $x_1 \in \ran E(\D)$ and $x_2 \in \ran E(\C\setminus\D)$. Then we have $N^n x = (N^n x_1)\oplus(N^n x_2)$. A straightforward application of the function model for normal operators (see e.g. \cite{Co}) gives us $\lim_{n\to\infty}\|N^n x_1\| = 0$ and 
\[
\lim_{n\to\infty}\|N^n x_2\| = \left\{\begin{matrix}
\|x_2\| & \text{if } x_2 \in \ran E(\T),\\
\infty & \text{otherwise},
\end{matrix}\right.
\]
where $\T$ is the unit circle of $\C$. Therefore we obtain that $x\in\Stab(N)$ implies $x\in \ran E(\D)$. On the other hand, if we have $x\in \ran E(\D)$, then we easily obtain $x\in\Stab(N)$, which implies \eqref{Nstab_eq}.

Next, let us suppose that we have $NX h = XS_{\underline{w}} h$ $(h\in\dom S_{\underline{w}})$ with some invertible operator $X\colon \irH\to\irH$ which also satisfies $X(\dom S_{\underline{w}}) = \dom N$. By Lemma 1, we obtain $(c\cdot N)^nX h = X(c\cdot S_{\underline{w}})^n h$ $(h\in\dom S_{\underline{w}}^n, c>0)$ and $X(\dom S_{\underline{w}}^n) = \dom N^n$ for every $n\in\N$. Therefore we have 
\[
\Stab(c\cdot N) = X(\Stab(c \cdot S_{\underline{w}})) = X(\Stab(S_{c \cdot \underline{w}})). 
\]
By the observations made above, we conclude the following:
\begin{equation}\label{Stab_eq}
\Stab(c\cdot N) = \ran E\left(\tfrac{1}{c}\cdot \D\right) \in \left\{\ell^2(\Z),\{0\}\right\} \quad (\forall\; c>0).
\end{equation}
We conclude that we have only two possibilities: either $N = 0$ or $E$ is concentrated on a circle centred at zero. The first one contradicts to the injectivity of $S_{\underline{w}}$. Therefore we have the second case, which means that $c\cdot S_{\underline{w}}$ is similar to a unitary operator with some number $c>0$.

Finally, we use the Sz.-Nagy similarity theorem and the following well-known equations (see \cite[Proposition 2]{Sh}) in order to complete the proof:
\[
\left\|(c\cdot S_{\underline{w}})^n\right\| = \sup \left\{ c^n \cdot \prod_{j=1}^n |w_{k+j}| \colon k\in\Z, n\in\N \right\} \quad (n\in\N)
\]
and
\[
\left\|(c\cdot S_{\underline{w}})^{-n}\right\| = \sup \left\{ \frac{1}{c^n \cdot \prod_{j=1}^n |w_{k+j}|} \colon k\in\Z, n\in\N \right\} \quad (n\in\N).
\]
\end{proof}

We note that the equivalence of (b) and (c) could be proven by a simple application of the Sz.-Nagy theorem as well. In fact, the operator $A$ (which depends on a Banach limit), defined by Sz.-Nagy, is diagonal with respect to the standard orthonormal base in this case, and the unitary operator, to which $c\cdot S_{\underline{w}}$ is similar, is a weighted bilateral shift operator with weights of unit modulus (see \cite{Ge,Ku,NFBK,SzN}).

We close our paper with the following consequence.

\begin{cor}
Let $S_{\underline{w}}$ be an injective weighted bilateral shift operator on $\ell^2(\Z)$ which is similar to a normal operator. Then $S_{\underline{w}}$ is a bounded, cyclic operator, which is not supercyclic. Furthermore, its spectrum is a circle centred at zero.
\end{cor}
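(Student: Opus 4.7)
\noindent\emph{Proof plan.} My plan is to leverage Theorem~\ref{bil_thm}, by which similarity to a normal operator is equivalent to the existence of a constant $c>0$ such that $c\cdot S_{\underline{w}}$ is similar to the simple bilateral shift $S$. Each of the four assertions in the corollary then reduces to a corresponding property of the unitary $S$ together with a preservation principle under similarity and under multiplication by a non-zero scalar.

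For boundedness, I would specialise condition (c) of Theorem~\ref{bil_thm} to $n=1$: this gives $\sup_{k\in\Z} c|w_k|<\infty$, so the weight sequence $\underline{w}$ is bounded, which, by the observation in the introduction, is equivalent to $S_{\underline{w}}$ being bounded.

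For cyclicity and non-supercyclicity, I would invoke two classical facts, both of which are preserved under similarity and under multiplication by a non-zero scalar. The first is that $S$ is cyclic (a well-known but non-trivial theorem; see e.g.~\cite{Ni}). The second is the Hilden--Wallen theorem, according to which no hyponormal (in particular, no normal) operator is supercyclic; so the unitary $S$ is not supercyclic. Transferring both properties along the similarity $c\cdot S_{\underline{w}}\sim S$ and then rescaling by $1/c$ yields the claims for $S_{\underline{w}}$.

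For the spectrum, I would use that $\sigma(\cdot)$ is invariant under similarity and satisfies $\sigma(\lambda T)=\lambda\cdot\sigma(T)$. Combined with $\sigma(S)=\T$, this immediately gives $\sigma(S_{\underline{w}})=\frac{1}{c}\cdot\T$, the circle of radius $1/c$ centred at zero. I do not anticipate any real obstacle beyond correctly quoting the two classical facts about $S$; once these are in hand, the whole corollary is a direct consequence of Theorem~\ref{bil_thm}.
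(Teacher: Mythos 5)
Your proposal is correct and follows essentially the same route as the paper: reduce via Theorem~\ref{bil_thm} to the statement that $S_{\underline{w}}$ is similar to $\tfrac{1}{c}\cdot S$, then transfer boundedness, cyclicity, non-supercyclicity, and the spectrum $\tfrac{1}{c}\cdot\T$ along similarity and scalar multiplication. The only cosmetic differences are that you derive boundedness from condition (c) at $n=1$ rather than directly from the similarity, and you cite the Hilden--Wallen theorem for the non-supercyclicity of $S$ where the paper cites Salas; both are valid.
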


\begin{proof}
Since $S$ is cyclic, bounded, and its spectrum is $\T$, we obtain that $S_{\underline{w}}$, which is similar to $\tfrac{1}{c}\cdot S$ with some $c>0$, is also cyclic, bounded, and its spectrum is $\tfrac{1}{c}\cdot\T$. Furthermore, $\tfrac{1}{c}\cdot S$ is not supercyclic by \cite[Theorem 3.1]{Sa2}, which implies that neither is $S_{\underline{w}}$
\end{proof}

\section*{Acknowledgement}
The author was supported by the "Lend\"ulet" Program (LP2012-46/2012) of the Hungarian Academy of Sciences.

\end{document}